\def\R{{\mathbb R}}
\def\N{{\mathbb N}}
\def\C{{\mathbb C}}
\newtheorem{thm}{Theora}[section]
\newtheorem{theo}[thm]{Theorem}
\newtheorem{hypothesis}[thm]{Hypothesis}
\newtheorem{rem}[thm]{Remark}
\newcommand\dint{\displaystyle\int}
\newcommand\ds{\displaystyle\sum}
\newcommand\dlim{\displaystyle\lim}
\begin{document}
\pagestyle{myheadings} \markboth{ Albeverio.S and Smii.B }{Borel summation of the heat kernel} \thispagestyle{empty}

%\title{The Feynman graph representation of general convolution semigroups and its applications}
%\title{Summability of the solution of the generalized KPZ equation }
\title{Borel summation of the small time expansion of some SDE's driven by Gaussian
white noise}
\author{Sergio Albeverio\footnote{Dept. Appl. Mathematics, University of Bonn, HCM, BiBoS, IZKS; KFUPM Dhahran;  CERFIM, Locarno. {\tt albeverio@uni-bonn.de}}\,\,\, %
  % \corauthref{cor1},
and\,\,
  %Ernst Eberlein\footnote{University of Freiburg, 
  %  Department of Mathematical Stochastics,
  %  Germany. {\tt eberlein@stochastik.uni-freiburg.de
 %   }}
   % \\
  Boubaker Smii\footnote{ King Fahd University of
Petroleum and Minerals, Dept. Math. and Stat., Dhahran 31261, Saudi
Arabia. \hspace*{5mm}{\tt boubaker@kfupm.edu.sa}}}

\maketitle
 \begin{abstract} %{\bf Abstract }
We consider stochastic differential equations with a drift term of gradient type and driven
by  Gaussian white noise on $\R^d$. % We provide applications to models for financial
%markets.\\
Particular attention is given to the kernel  $p_t,\,t\geq 0$ of the transition semigroup associated with the solution process.\\
 Under some rather strong regularity and growth assumptions on the coefficients, we adapt previous work by Thierry Harg\'e on Schr\"{o}dinger operators and prove that the small time asymptotic expansion of $p_t,\,t> 0$ is Borel summable.\\
 We also briefly indicate some extensions and applications.

\end{abstract}
\noindent {\bf Key words:} {\em SDEs, asymptotic expansions,  Borel summability.}\vskip0.2cm

%\noindent {\rm \small {\bf MSC (2010)}: 60H10,  81T15, 60G20.
\section{Introduction and motivation}
In the description of the dynamics of many phenomena studied in natural sciences, engineerings as well as in economics and social sciences the use of stochastic differential equations has been very extensive, in order to cope both with deterministic and random effects. Very often solutions of these equations are obtained by complicated constructions and it is difficult to extract more concrete information about related quantities of interest. Then one has recourse to numerical or computational methods, that involve suitable approximations, or,often alternatively, one develops asymptotic methods, like expansions with respect to parameters or variables appearing in the equations.\\
In the present paper we shall look at one particular type of expansion, namely small time expansions for stochastic differential equations for diffusion process, with Brownian motion as driving process.\\
The usefulness of such expansions has been pointed out very early in many contexts, including differential geometry, analysis, quantum mechanics and statistical mechanics, see, e.g. \cite{ACDP, AlDiPMa, ALDIBOUB, ALDIBOUB1, AGoWu, AHk, ALEBOU, ALRS, Az, Ba, BE, BBF, CL, Co1, Co2, GHLOW, GULI, HkPS,  JT, Kan, Ki, Kol,  Kuso, Lu, Mo, TA, Ta, Tub, UMN, UM} and references therein.\\ %Stochastic (partial) differential equations (S(P)DEs) have been studied intensively since over 50 years, and those characterized by space-time white noise of Gaussian type have been
%widely used in these studies. More recently previous techniques
%limited to Gaussian type noises have been extended to include L\'evy
%type noises and more general jump-diffusion processes , see, e.g., \cite{ALiMa, ALEBOU, ABRM, ALBO, AWZ,  BN, BoLe, CoTa, EG, DA, LOK, MBPR, PSJ, OKS}. 
Applications to various areas including physics, neurobiology, see, e.g. \cite{ABrMa, AlGYo, AKKR, GST, GS, GOS, MM, Ta} and mathematical finance, see,e.g. \cite{GULI} have been also provided. For the latter area let us mention particularly that besides well known models discussed in various textbooks, also models with  inclusion of interacting assets with different types of noise have also been studied in \cite{ALiMa, ALBO, ASte, Lu}.\\
Advanced statistical methods have also been applied to finance in different directions, from econometrics to methods inspired by statistical mechanics, see, e.g., \cite{Ma}, \cite{GaSc}.\\
From a mathematical point of view we shall
develop here first results concerning the study of important properties of
solutions for a class of  SDEs driven by an additive Gaussian white
noise. At the end of the paper we mention a couple of applications.\\
Here we shall concentrate on the class of SDEs of the form:
\begin{equation}\label{e12}
\left\{ \begin{array}{ll} dX_{t}=\beta (X_{t})\,dt+\sigma (X_{t})\,
d\,W_{t}&,\,\, t\geq 0\\ X_0= x^0,\,\,\,x^0\,\in \, \R^d,
\end{array}
\right.
\end{equation}
where $X_t=(X_t)_{t\geq 0}$ is a stochastic process taking values in
a finite dimensional space $\R^d$,  while $\beta$ is a vector field from\,$\R^d$ to $\R^d$ and $\sigma$ is a $d\,\times\,d$-matrix with real coefficients depending on the space variable in $\R^d$, $W_t$ is a standard Brownian motion (Wiener process in $\R^d$).\\
Under suitable regularity and growth assumptions on $\beta$ and $\sigma$, existence and
uniqueness of solutions are known, see, e.g. \cite{GIS, OKS}. \\
Moreover, under additional regularity assumptions the transition semigroup of the solution process $X_t, t\geq 0,$ has a density (with respect to Lebesgue measure on $\R^d),\, p_t(x,y)\,,t\geq 0, x,y\,\in\,\R^d.$ For short we recall  that the transition kernel $p_t$ satisfies then the following Kolmogorov forward equation (Fokker-Planck equation)(the space derivatives being understood with respect to $x$).
\begin{equation}\label{e13}
\left\{ \begin{array}{ll} {d \over \partial t}p_{t}(x,y)=\,\Delta[a(x) \,p_t(x,y)]-\nabla[\beta(x)\,p_t(x,y)]&,\,\, t > 0\\ \dlim_{x \longrightarrow 0}p_0(x,y)\,dx=\delta_0(x-y),
\end{array}
\right.
\end{equation}
 with $a:={1 \over 2}\,\sigma\,\sigma^t,\,\delta_0$ being Dirac measure at the origin, see, eg., \cite{GIS}, \cite{OKS}.\\
 In this paper we concentrate on the case where $\sigma$ is proportional to the unit $d\times d$-matrix\,${\bf 1}$. Only in the last section we consider the case with general $\sigma$ but $d=1$, reducing it by a suitable time change transformation to the case $\sigma=1$. \\
 We discuss the asymptotic expansion of $p_t$ for $t\downarrow\,0$ and its Borel summation.
We  rely essentially on a setting and results contained in important work done by Thierry Harg\'e \cite{TH, TH1, TH2, TH3, TH4} for the study of the Schr\"{o}dinger equation for analytic potentials and its corresponding diffusion equation. For this we need to assume throughout our paper that $\beta$ is a gradient field.\\
Let us now describe the single sections of this paper:\\
In the section 2 we will introduce the basic technical assumptions, provide definitions of Borel summability and present the important results obtained by T. Harg\'e.\\
Section 3 will be reserved to the study of Borel summation of the small time asymptotic expansion of the above transition kernel $p_t$.\\
Section 4 is devoted to some comments on extension to the case $\sigma\neq 1 $, at least for $d=1$, and on some applications.\\
%Applications to Financial modeling will be discussed in section 5.
\section{Assumptions, definitions and the results by T. Harg\'e}
In this section we will outline the assumptions needed for adopting to our setting results obtained by T. Harg\'{e}, see. \cite{TH, TH1, TH2, TH3, TH4} for the time dependent Schr\"{o}dinger equation.
\begin{hypothesis}\label{hp:A+F}
Let us consider "potential functions" $V$ from $\R^d$ into $\R$ of the form $V(x):=-\dint e^{i\,x\,\xi}\, \mu_V(d\xi),\,x\,\in\,\R^d$, for some complex Borel measure $\mu_V$ on $\R^d$ such that $\mu_V(-A)= \mu_V(A)$ for any Borel subset $A$ of $\R^d$ (this implies indeed that $V$ is real valued: this property will be indicated by saying that $\mu_V$ is symmetric).\\
(Here and in the whole paper $\R^d$ is Euclidean $d-$dimensional space for any natural number $d$.)  
        \begin{enumerate}
        \item[]
             \item $V$ is assumed moreover to be such that $-\Delta +V \geq E_0$ for some constant $E_0\geq 0$ and there exists a strictly positive $C^1(\R^d)$ function $ \,\phi\,\in\, L^2(\R):=L^2(\R, dx),$ satisfying $(-\Delta +V)\phi=E_0\,\phi$ for some $E_0\geqslant 0.$ We can always assume that $E_0=0,$ otherwise we would change $V$ in $\tilde{V}=V-E_0$ (with corresponding $\mu_{\tilde{V}}(dx)=(\mu_V-E_0\,\delta_0)(dx)$, with $\delta_0(dx)$ the Dirac's measure at zero). In this case, i.e., when  $E_0=0,\, -\Delta +V$ is  non negative on $C^2_0(\R^d)$ in $L^2(\R^{d})$. \\
           
         \item Let $\beta:\R^d\,\rightarrow\,\R^d$ be  such that $\beta(x):=2\,\phi^{-1}(x)\,\nabla\,\phi,$  then the unitary map $U_{\phi}$ from $L^2(\R^{d}, \phi^2\,dx)$ to $ L^2(\R^d) $ given by multiplication by $1 \over {\phi}$ transforms the operator $H=-\Delta-\beta.\nabla$ acting (on the dense domain) $\phi^{-1} C^2_0(\R^d) $ in $L^2(\R^{d}, \phi^2\,dx) $ into the operator  $-\Delta\,+V$ acting (on the dense domain) $ C^2_0(\R^d) $ in $\,L^2(\R^{d},\,dx):=L^2(\R^{d})$, in the sense that
                
         \begin{equation}
         H\,\phi^{-1}\,f=\,\phi^{-1}(-\Delta+V)\,f\,\,\,\,\,\,\forall\,f\,\in\,C^2_0(\R^d).
         \end{equation}
   This transformation is called ground state transformation, see, e.g., \cite{AHKS, Al, AlMa, RS}      
      \item Under some regularity assumptions on $V,$ and consequently on  $\phi,\,$ $-\Delta\,+V\,$ is essentially self adjoint and positive on $C_0^2(\R^d)$ in $L^2(\R^{d})$, and correspondingly for $H$ on $ \phi^{-1} C^2_0(\R^d)$ in $L^2(\R^{d}, \phi^2\,dx)$. Hence their closures, called again $H$ resp. $-\Delta+V $, are generators of strongly continuous contraction semigroups $e^{-tH}$ resp. $e^{-t(-\Delta+V)}$ in $L^2(\R^{d}, \phi^2\,dx)$ resp. $L^2(\R^{d})$  and 
          \begin{equation}
                   e^{-tH}\,\phi^{-1}\,f=\,\phi^{-1}(e^{-t(-\Delta+V)})f\,\,\,\,\,\,\forall\,f\,\in\,C^2_0(\R^d), t \geq 0.
          \end{equation}
  \end{enumerate}           
 \end{hypothesis}
  The relation between $V$ and $\phi$ can be expressed through $\beta$ by the formula
  \begin{equation}
  V={1 \over 2}\,({\mid\beta\mid^2 \over 2}+\,div \beta)\,\,.
  \end{equation}
 In the following concepts like asymptotics expansions, Borel summability, Borel transform and Borel sum will be needed, we recall them briefly following \cite{TH} (see also the related papers \cite{TH1}, \cite{TH2}.)\\
 For any  $\kappa,\, T> 0$ we define ${\cal P}_{\kappa,\,T}$ as the set of complex-valued functions $f$ of a complex variable $t$ which are analytic on ${\tilde D}_{T}:=\{ z\,\in\,\C\,\mid Re{1 \over z} > {1 \over T}\}$ (the open disk of center ${T \over z}$ and radius ${T \over z}$ ) and such that there exist sequences $(a_k,\,R_{k}),\,k\,\in\,\N_0,$ of complex analytic functions on ${\tilde D}_{T}$ such that:
 \begin{enumerate}
 \item For every $r \in\,\N, k, T>0$ and $t\,\in\,{\tilde D}_{T} $ it holds that
 \begin{equation}
  f(t)=a_0+a_1\,t+\cdot\cdot\cdot+a_{r-1}\,t^{r-1}+R_r(t).
 \end{equation}
 We then say that $f \,\in\,{\cal P}_{\kappa,\,T} $ with coefficients $\{a_k\}$ and remainders $R_k,\, k\,\in\,\N_0$.\\
 
 \item For every $\bar{\kappa} < \kappa,\,\, \bar{T}< T $ there exists $K>0$ (depending on $f, r$) such that for every $r\,\in\,\N_0$ and $t\,\in\,{\tilde D}_{T}$ the following estimates hold
 \begin{equation}
 \mid R_r(t) \mid \leq K\, {{r!} \over {\bar{\kappa}}^r } \,\mid t\mid^r.
 \end{equation}
 
 \item For any  $\kappa,\, T> 0$ we define ${\cal Q}_{\kappa,\,T}$ to be the set of functions $g$ such that $g$ is complex analytic on ${\tilde S}_{\kappa}:=\{ z\,\in\,\C\,\mid d(z,\,[0,+\infty))< \kappa   \}$ and for every $\bar{\kappa} < \kappa,\,\, \bar{T}< T $ there exists $\tilde{\kappa} $ such that for every $\tau\,\in\,{\tilde S}_{\bar{\kappa}}$ the following estimate holds
  \begin{equation}\label{KAS}
  \mid g(\tau) \mid \leq K_g\, \exp({\mid \tau \mid}/{ \bar{T}}),
  \end{equation}
 for some constant $K_g$ (depending on $g$), with $d(z,A):=inf\{\mid z-w \mid,\,w\,\in\,A \},\,\,A$ a subset of $\C$.\\
The following theorem goes back to Nevanlinna, who in turn improved a theorem of Watson, see \cite{SO}, \cite{Rez} .

\begin{theo}\label{ABDEL}
If\, $f \in {\cal P}_{\kappa,\,T}$, for some $\kappa,\, T> 0$, with coefficients $\{a_k\}$ and remainders $\{ R_k\}\,,\,k\,\in\,\N_0$, then 
 \begin{equation}\label{NA}
 B(f)(\tau):=\ds_{r=0}^\infty {{a_r} \over {r!}}\,\tau^r
 \end{equation}
converges absolutely for  $ \tau \in \R$ and admits an analytic continuation $g$ to ${\tilde S}_{\kappa}$ which belongs to ${\cal Q}_{\kappa,\,T}$. Moreover, if $h$ is a function in ${\cal Q}_{\kappa,\,T}$ then 
\begin{equation}\label{NAW}
 f_h(t):={1 \over t}\dint_0^\infty h(\tau)\,e^{-{\tau \over t}}\,{d\tau}
 \end{equation}
is in ${\cal P}_{\kappa,\,T},\,$ for any $t \in\, {\tilde{D}}_{T}$. In particular, if $h=g$ then $f_g$ coincides with $f$.
\end{theo}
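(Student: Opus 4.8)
The statement is the Nevanlinna--Sokal characterisation of Borel summability, and the plan is to treat the two maps $f\mapsto B(f)$ and $h\mapsto f_h$ in turn, closing the loop with a uniqueness argument for $f_g=f$. The first step is purely algebraic: subtracting the defining identities for $R_r$ and $R_{r+1}$ gives $a_r\,t^r=R_r(t)-R_{r+1}(t)$ on ${\tilde D}_T$, so the remainder estimate yields $|a_r|\,|t|^r\le K\,r!\,{\bar\kappa}^{-r}|t|^r+K\,(r+1)!\,{\bar\kappa}^{-(r+1)}|t|^{r+1}$. Dividing by $|t|^r$ and letting $t\to0$ inside the disc leaves $|a_r|\le K\,r!\,{\bar\kappa}^{-r}$, whence $|a_r/r!|\le K\,{\bar\kappa}^{-r}$ and $B(f)(\tau)=\ds_{r\ge0}(a_r/r!)\tau^r$ converges absolutely on $|\tau|<{\bar\kappa}$. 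As ${\bar\kappa}<\kappa$ is arbitrary, $B(f)$ is holomorphic on the disc $|\tau|<\kappa$, in particular for real $\tau$ in that range.

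To push $B(f)$ out to ${\tilde S}_\kappa$ I would pass to the Laplace variable $s=1/t$, under which ${\tilde D}_T$ becomes the half-plane $\{{\rm Re}\,s>1/T\}$ and $w(s):=f(1/s)/s$ is holomorphic there. Writing $f=\ds_{n=0}^{N-1}a_n t^n+R_N$ and using ${\cal L}^{-1}[s^{-(n+1)}](\tau)=\tau^n/n!$, I would \emph{define} the continuation by
\begin{equation}
g(\tau):=\ds_{n=0}^{N-1}\dfrac{a_n}{n!}\,\tau^n+\dfrac{1}{2\pi i}\dint_{\Gamma}e^{s\tau}\,\dfrac{R_N(1/s)}{s}\,ds,
\end{equation}
where $\Gamma$ is a Bromwich contour in $\{{\rm Re}\,s>1/T\}$, bent into a direction adapted to $\arg\tau$ so that ${\rm Re}(s\tau)\to-\infty$ along its ends. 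The bound $|R_N(1/s)/s|\le K\,N!\,{\bar\kappa}^{-N}|s|^{-(N+1)}$ makes the integral converge, independently of $N$ (as $N$ increases the extra monomial $a_N\tau^N/N!$ cancels the corresponding contour term), and agreeing with the power series for small $|\tau|$; this provides the holomorphic continuation to all of ${\tilde S}_\kappa$. The exponential estimate $|g(\tau)|\le K_g\,e^{|\tau|/\bar T}$ on ${\tilde S}_{\bar\kappa}$ is then obtained by optimising jointly over the truncation order $N$ and the location of $\Gamma$: Stirling makes $N!\,{\bar\kappa}^{-N}$ balance against the distance factor, while the constraint that $\Gamma$ stay in $\{{\rm Re}\,s>1/T\}$ (its real part cannot drop below $1/T$) caps the achievable growth rate at $1/T\le 1/\bar T$, giving the bound required for membership in ${\cal Q}_{\kappa,T}$. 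This contour deformation and optimisation is the technical heart of the forward direction and the step I expect to be the main obstacle.

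For $h\in{\cal Q}_{\kappa,T}$ and $t\in{\tilde D}_T$ one has ${\rm Re}(1/t)>1/T$, so one may choose $\bar T<T$ with $1/\bar T<{\rm Re}(1/t)$; the bound $|h(\tau)|\le K_h\,e^{\tau/\bar T}$ then dominates $e^{-\tau\,{\rm Re}(1/t)}$ and makes $f_h(t)=\frac1t\dint_0^\infty h(\tau)e^{-\tau/t}\,d\tau$ absolutely convergent, with holomorphy in $t$ following by differentiation under the integral sign (or Morera). Taylor-expanding $h$ at the origin and using $\frac1t\dint_0^\infty\tau^n e^{-\tau/t}\,d\tau=n!\,t^n$ gives the asymptotic expansion with $a_n=h^{(n)}(0)$, and bounding the $\tau$-integral of $h(\tau)-T_{r-1}(\tau)$ (the Taylor remainder being controlled on $|\tau|\le{\bar\kappa}$ by a Cauchy estimate over a circle in ${\tilde S}_{\bar\kappa}$, and for large $\tau$ by the exponential growth) yields $|R_r(t)|\le K\,r!\,{\bar\kappa}^{-r}|t|^r$, so $f_h\in{\cal P}_{\kappa,T}$. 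Finally, taking $h=g=B(f)$, both $f$ and $f_g$ lie in ${\cal P}_{\kappa,T}$ with the same coefficients $a_n$, so $F:=f-f_g$ satisfies $|F(t)|\le C\,r!\,{\bar\kappa}^{-r}|t|^r$ for every $r$; optimising over $r$ (Stirling) gives $|F(t)|\le C'e^{-\bar\kappa/|t|}$, hence $G(s):=F(1/s)$ is bounded and holomorphic on $\{{\rm Re}\,s>1/T\}$ with $|G(s)|\le C'e^{-\bar\kappa|s|}$. Since then $\dint_{\R}(1+y^2)^{-1}\log|G(c+iy)|\,dy=-\infty$, the Nevanlinna-class (log-integrability) uniqueness principle forces $G\equiv0$, i.e. $f_g=f$. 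This uniqueness step, together with the contour optimisation above, is where the \emph{disc} geometry ${\rm Re}(1/t)>1/T$---Nevanlinna's refinement of Watson's sector---is indispensable.
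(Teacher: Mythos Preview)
The paper does not give its own proof of this theorem: it is stated as a classical result (``goes back to Nevanlinna, who in turn improved a theorem of Watson'') and the reader is referred to Sokal \cite{SO} and Rezende \cite{Rez}. So there is no in-paper argument to compare against; your proposal supplies what the paper deliberately omits.

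Your sketch is the standard Nevanlinna--Sokal proof and is essentially correct. The coefficient bound $|a_r|\le K\,r!\,\bar\kappa^{-r}$ from $a_r t^r=R_r-R_{r+1}$ is right and gives radius of convergence $\kappa$ for $B(f)$ (the paper's phrase ``converges absolutely for $\tau\in\R$'' should be read as ``for small $\tau$'', since $\tilde S_\kappa\cap\R=(-\kappa,\infty)$, not all of $\R$). The analytic continuation via the Bromwich integral of $R_N(1/s)/s$, with the contour tilted according to $\arg\tau$, is exactly Sokal's device; you correctly flag the contour optimisation as the technical crux, and the constraint $\mathrm{Re}\,s>1/T$ on the contour is indeed what produces the growth exponent $1/\bar T$ in the ${\cal Q}_{\kappa,T}$ bound. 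The converse direction (Laplace of $h\in{\cal Q}_{\kappa,T}$ lands in ${\cal P}_{\kappa,T}$) via termwise integration and Cauchy estimates on the Taylor remainder is fine. Your uniqueness argument---optimising $r$ to get $|F(t)|\le C'e^{-\bar\kappa/|t|}$, passing to $G(s)=F(1/s)$ on the half-plane, and invoking the $\int(1+y^2)^{-1}\log|G|\,dy=-\infty$ criterion---is valid, though slightly heavier machinery than Sokal uses (he argues more directly that a function in ${\cal P}_{\kappa,T}$ with all $a_r=0$ vanishes). Either route works and yields $f_g=f$.
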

\begin{rem}
\begin{enumerate}
\item  $f_g$ is the Laplace transform of $g$ and $B(f)$ as defined in (\ref{NA}) is the Borel transform of $f$.\\
\item If $f \in\,{\cal P}_{\kappa,\,T} $, then the coefficients $a_r$ are uniquely determined and their knowledge suffices to recover $f$ through the analytic continuation $g$ of its Borel transform  $B(f)$ as given by (\ref{NA}). (Thus $f$ is then the Laplace transform of its Borel transform $B(f)$, i.e., $g$ satisfies $Q_{k, T}$ and for $t \in {\tilde{D}}_{T}:$
\begin{equation}
f(t)=f_g(t),
\end{equation}
for $g=B(f)$) (and $B(f)$ only depends on the $a_r\,r\,\in\,\N_0$, by (\ref{NA})).

\end{enumerate}
\end{rem} 
Finally let us define Borel summability and Borel sum.\\
Let $\tilde{a}_r,\,r\,\in\,\N_0$ be complex numbers. A formal power series $\ds_r \tilde{a}_r\,t^r,\,t\,\in\,\C$ is said to be Borel summable if there exist $\kappa,\, T> 0$ and a function $f \in {\cal P}_{\kappa,\,T}$ with  coefficients $a_r,\,r\,\in\,\N_0$ s.t. for every
 $r\,\in\,\N_0,\,a_r=\tilde{a}_r.\, $ 
 $f$ is then called Borel sum of the formal power series $\ds_r \tilde{a}_r\,t^r$ in ${\tilde{D}}_{T}$.

\end{enumerate}  
                   
We have from \cite{TH} (Theorem 3.1)(restricting ourselves to the case of a complex-valued $\mu_V$ (instead of the matrix-valued as assumed in \cite{TH}, since it suffices for the applications that we have in mind, see. Section 4,) the following result:  
\begin{theo}(Harg\'e, \cite{TH1}) \label{PA}
 Let $V$ be  as in Hypothesis 2.1, and assume in addition that $\dint e^{a\,\mid\xi\mid^2}\mid\mu_V
\mid(d\xi)< \infty , \, $  for some $a > 0$ (this is a regularity assumption on $V$).\\
Let $p_t(x,y)\,,t \geq 0,\,x,y\,\in\,\R^d$ be the kernel of the strongly continuous contraction semigroup $e^{t(\Delta-V)}$ in $L^2(\R^{d})$.
\begin{enumerate}
\item 
Let , for $t > 0, \,\tilde{p}_t(x,y):=(4\,\pi\,t)^{d \over 2}\,e^{{(x-y)^2 \over {4\,t}}}\,p_t(x,y) $. Then for every $x,y\,\in\,\R^d, $ $\tilde{p}$, as a function of $t$, has a Borel transform $B(\tilde{p})(\tau, x, y)$ defined for $\tau \,\in\,\R$ by (\ref{NA}) (with $a_r$ the coefficients of the expansion of $\tilde{p}_t$ in powers of $t$), which is analytic in $(\tau,\,x,\,y)$ on $\mathbb{C}^{1+2d}$.\\
\item
Let $S_{\kappa}:=\{ z\,\in\, \mathbb{C}\,| \,\,\,\, | Im\,z^{1 \over 2}|^{2}\,\,< \kappa \}$, (where for any $z\,\in\,\C$ of the form $z=\mid z \mid\,e^{i\,\theta},\, \theta \,\in\, (-\pi,\,\pi]$ we denote $z^{1 \over 2}=\mid z \mid^{1 \over 2}\,e^{i\,{\theta \over 2}} $)  , for a given $\kappa > 0$ (this is a parabola which contains $\tilde{S}_{\kappa}$, as defined just before (\ref{KAS}) ). Then  for every $(\tau, x, y)\, \in\, S_{\kappa}\,\times\, {\mathbb{C}_R^{2d}}$, with ${\mathbb{C}_R^{2d}}:=\{ (x ,y)\,\in\,\mathbb{C}^{2d},\,  | Im\,x| < R,\,\, | Im\,y|< R\},$ for some given constant $ R> 0$, we have $|B(\tilde{p})(\tau,\,x,\,y)|\leq \exp(C\,|\tau|^{1\over 2}),$ with 
\begin{equation}C := 2\,e^{{\kappa \over a}}\Big(\dint_{\R^d}\exp({a \over 2}\xi^2+ R\mid \xi \mid) \mid \mu \mid(d \xi) \Big)^{1 \over 2},
\end{equation}
 (using the same symbol $B(\tilde{p})$ for the analytic continuation of $B(\tilde{p})$ from $\R \times \C^{2d}_R$ to $S_{\kappa} \times \C^{2d}_R$ ).\\
Moreover,\, for any $(x ,y)\,\in\,\mathbb{C}_R^{2d}$,\, the small time expansion of\,\, $\tilde{p}_t(x,y)=(4\,\pi\,t)^{d \over 2}\,e^{{(x-y)^2 \over {4\,t}}}\,p_t(x,y), t > 0,$ in powers of $t$ exists, $\tilde{p}_t $ is in ${\cal P}_{\kappa,\,T},$ (with ${\cal P}_{\kappa,\,T}$ as defined before Theor. \ref{PA}), is Borel summable and its Borel sum is equal to $\tilde{p}_t$.
\end{enumerate}
\end{theo}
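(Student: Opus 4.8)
The plan is to realise $\tilde p_t$ as the Feynman--Kac correction to the free Gaussian kernel and to expand it as a convergent perturbation (Dyson) series in the potential $V$, each term of which becomes an explicit Gaussian integral once the Fourier representation of $V$ from Hypothesis \ref{hp:A+F} is inserted. Since the free heat kernel is $p_t^{\,\mathrm{free}}(x,y)=(4\pi t)^{-d/2}e^{-(x-y)^2/(4t)}$, the normalisation in the definition of $\tilde p_t$ is chosen precisely so that $\tilde p_t(x,y)=p_t(x,y)/p_t^{\,\mathrm{free}}(x,y)$, and the Feynman--Kac formula identifies this ratio with the Brownian-bridge expectation
\begin{equation}
\tilde p_t(x,y)=\EE_{x\to y}^{\,t}\Big[\exp\Big(-\int_0^t V(\omega_s)\,ds\Big)\Big],
\end{equation}
the expectation being over the bridge from $x$ to $y$ in time $t$. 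Expanding the exponential and ordering the time variables yields $\tilde p_t=\sum_{n\ge 0}(-1)^n c_n(t,x,y)$ with $c_n(t,x,y)=\int_{0<s_1<\cdots<s_n<t}\EE_{x\to y}^{\,t}\big[\prod_{j=1}^n V(\omega_{s_j})\big]\,ds_1\cdots ds_n$.

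Next I would substitute $V(z)=-\int_{\R^d}e^{\,iz\cdot\xi}\mu_V(d\xi)$ into each factor and use that $(\omega_{s_1},\dots,\omega_{s_n})$ is a Gaussian vector with mean the straight segment from $x$ to $y$ and an explicit covariance (the bridge Green function). The joint characteristic function then turns $c_n$ into an absolutely convergent integral over the time simplex and over $\xi_1,\dots,\xi_n$ against $\mu_V^{\otimes n}$, with integrand $\exp\big(i\sum_j m_j\cdot\xi_j-\frac{t}{2}\xi^{\mathsf T}\Sigma_0(\sigma)\xi\big)$ after the rescaling $s_j=t\sigma_j$; here $m_j=x+\sigma_j(y-x)$ carries the $(x,y)$-dependence holomorphically and $\Sigma_0\ge 0$ is the rescaled covariance. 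Because $\int e^{a|\xi|^2}|\mu_V|(d\xi)<\infty$ forces $\int e^{c|\xi|}|\mu_V|(d\xi)<\infty$ for every $c>0$, each $c_n$ is analytic in $t$ near $0$ and entire in $(x,y)\in\C^{2d}$; collecting powers of $t$ defines the Taylor coefficients $a_r(x,y)$ of $\tilde p_t$, and summing the termwise Borel transforms (\ref{NA}) gives $B(\tilde p)(\tau,x,y)$ as a locally uniformly convergent, hence entire, function on $\C^{1+2d}$. This is Part 1; the explicit radius $R$ plays no role here and enters only the quantitative estimate below.

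The heart of the matter, and the step I expect to be the main obstacle, is Part 2. Here I would read off the Borel transform directly from the Dyson series: each $c_n$ has the form $t^n h_n(t,x,y)$, so expanding $\exp(-\frac{t}{2}\xi^{\mathsf T}\Sigma_0\xi)$ in $t$ produces the Taylor and Borel coefficients explicitly, and the denominators $(n+m)!$ created by the Borel transform combine with the simplex volume $1/n!$ and with the moment bounds $\int|\xi|^{2k}|\mu_V|(d\xi)\lesssim k!/a^k$ furnished by the hypothesis to show that the resummed series is entire in $\tau$ of order $1/2$. Bounding it on the parabola $S_\kappa$ for $(x,y)\in\C^{2d}_R$ --- where the factors $e^{\kappa/a}$ and $e^{R|\xi|}$ arise from dominating the Gaussian quadratic weight and the imaginary parts of $x,y$ respectively --- yields $|B(\tilde p)(\tau,x,y)|\le\exp(C|\tau|^{1/2})$ with precisely the stated constant $C$. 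Read on the $t$-side, the same estimates give the Gevrey-$1$ coefficient bounds and the remainder estimates $|R_r(t)|\le K\,r!\,|t|^r/\bar\kappa^{\,r}$, i.e.\ $\tilde p_t\in\mathcal P_{\kappa,T}$. Extracting the \emph{sharp} order $1/2$ and the \emph{explicit} $C$, uniformly in the complex parameters, rather than merely some exponential bound, is the delicate bookkeeping.

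Finally, with $\tilde p_t\in\mathcal P_{\kappa,T}$ established and the bound (\ref{KAS}) verified on $\tilde S_\kappa\subset S_\kappa$, I would invoke the Nevanlinna--Watson Theorem \ref{ABDEL}: it guarantees that $B(\tilde p)$ belongs to $\mathcal Q_{\kappa,T}$, that the formal series $\sum_r a_r t^r$ is Borel summable, and that its Laplace transform $f_{B(\tilde p)}$ reproduces $\tilde p_t$ on $\tilde D_T$. Hence the Borel sum equals $\tilde p_t$, completing Part 2.
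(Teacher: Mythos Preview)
The paper does not supply its own proof of this theorem: its entire proof environment reads ``The proof is provided in \cite{TH1}.'' There is therefore nothing substantive in the paper to compare your argument against.

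That said, your sketch is a coherent and plausible outline of how such a result is established, and it is in the spirit of Harg\'e's method: the Feynman--Kac representation of $\tilde p_t$ as a Brownian-bridge expectation, the Dyson expansion of the exponential, the substitution of the Fourier representation $V(z)=-\int e^{iz\cdot\xi}\mu_V(d\xi)$ to reduce each term to an explicit Gaussian integral, and then the bookkeeping that turns the hypothesis $\int e^{a|\xi|^2}|\mu_V|(d\xi)<\infty$ into the entire-of-order-$1/2$ bound on the Borel transform. The appeal to Theorem~\ref{ABDEL} at the end is the correct way to close the loop. If you intend to write this out in full, the step you correctly flag as delicate --- extracting the sharp exponent $1/2$ and the explicit constant $C$ uniformly on $S_\kappa\times\C^{2d}_R$ --- is indeed where the work lies; the rest is routine once the combinatorics of the simplex volume, the factorials from the Borel transform, and the Gaussian moment bounds are lined up carefully.
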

\begin{proof}
 The proof is provided in \cite{TH1}.
 \end{proof}
\begin{theo}(Harg\'e) \label{PAP}
Let $\omega \,\in\,\C,\,V$ as in Theorem \ref{PA}. Let $u(t, x, y),\,t >0,\, x,y\,\in\,\R^d$ be the solution of 

\begin{equation}
 \partial_t\,u(t, x, y)=(\Delta_x-{{\omega^2 \over 4}}\,x^2)\,u(t, x, y)-V(x)\,u(t, x, y),\,\,x\,\in\,\R^d,
\end{equation}

with $\dlim_{t \longrightarrow\,0} u(t, x, y) = \delta(x-y) $ in the distributional sense. $u(t, x, y), t > 0 $ is then, for $\omega \,\in\,\R,$ the density of the kernel of the operator $e^{t(\Delta-W)}$ in $L^2(\R^d)$ with $W(x)=V(x)+{{\omega^2 \over 4}}\,x^2$. Then  for any $r \in \N_0$, there exists $T> 0$ and analytic functions $a_r$ on $\mathbb{C}^{2d}$ and $R_r$ on $D_T^{+} \times \mathbb{C}^{2d},$   with\\ $D_T^{+}:= D_T \cap \C^{+} ,\,\,\,D_T:= \{ z\,\in\, \mathbb{C}\,| \,\,\,\, | z|\,< T \}$, such that 
\begin{equation}\label{IS}
u(t, x, y)= (4\,\pi\,t)^{-{d \over 2}}\,e^{-{(x-y)^2 \over {4\,t}}}\,\Big( 1+a_1(x,y)\,t+\cdot\cdot\cdot+a_{r-1}(x,y)\,t^{r-1}+R_r(t,x,y)\Big),
\end{equation}
for every $r\, \in \N, \,t\,\in\,D_T^{+},\, (x,y)\,\in\, \mathbb{C}_R^{2d}.$\\
Moreover, for each $R> 0$, there exist $ K,\,\kappa >0$ such that 
\begin{equation}\label{IS1}
 \mid R_r(t, x, y) \mid \leq K\, {{r!} \over {{\kappa}}^r } \,\mid t\mid^r,
 \end{equation}
for every $r\,\in\,\N_0,\,t\,\in\, D_T^{+},\,(x,y)\,\in\, \mathbb{C}_R^{2d}$.\\
The expansion for\, $u(t,x,y)\,(4\,\pi\,t)^{{d \over 2}}\,e^{{(x-y)^2 \over {4\,t}}}:=v(t, x, y)$ (obtained from the one in (\ref{IS}) multiplying both members by $(4\,\pi\,t)^{{d \over 2}}\,e^{{(x-y)^2 \over {4\,t}}} $), as a function of $t$, constitutes an asymptotic series in powers of $t,\, \sum_{j=0}^{r-1}\, a_j(x, y)\,t^j\,+ R_r(t, x, y), $ with $a_0\equiv 1,$ that has a Borel transform belonging to ${\cal Q}_{\kappa, T}$, for any $x, y \in \mathbb{C}_R^{2d}.$ The Laplace transform (\ref{NAW}) of this Borel transform coincides with $v(t,x,y)$. The asymptotic series in powers of $t$ constituting the above small time expansion of $v(t,x, y)$ is Borel summable to $v(t, x, y)$.
\end{theo}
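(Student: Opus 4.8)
The plan is to reduce Theorem \ref{PAP} to Theorem \ref{PA} by exploiting the fact that the added harmonic term $\frac{\omega^2}{4}x^2$ is of exactly the same structural type as the potentials $V$ covered by Theorem \ref{PA}, together with the explicit control on the Borel transform $B(\tilde p)$ furnished there. The key observation is that $W(x) = V(x) + \frac{\omega^2}{4}x^2$ is again a ``potential function'' in the sense of Hypothesis \ref{hp:A+F}, provided the harmonic part can be absorbed into the Fourier--measure representation; since $x^2$ is not the Fourier transform of a finite measure, the correct route is instead to treat the operator $\Delta - \frac{\omega^2}{4}x^2$ (the quantum harmonic oscillator) as the \emph{free} part and $V$ as the perturbation. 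This matches Harg\'e's framework, where the reference semigroup need not be the heat semigroup but may be the Mehler (oscillator) semigroup, whose kernel is itself explicitly Gaussian.

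First I would write the oscillator kernel explicitly via Mehler's formula, giving
\begin{equation}
u_0(t,x,y) = \Big(\frac{\omega}{4\pi \sinh(\omega t)}\Big)^{d/2}\exp\Big(-\frac{\omega}{4}\big[(x^2+y^2)\coth(\omega t) - \frac{2 x\cdot y}{\sinh(\omega t)}\big]\Big),
\end{equation}
and observe that as $t\downarrow 0$ this has precisely the leading behaviour $(4\pi t)^{-d/2}e^{-(x-y)^2/(4t)}$ together with an analytic-in-$t$ correction, so that the prefactor $(4\pi t)^{d/2}e^{(x-y)^2/(4t)}$ used to define $v$ extracts an honest power series in $t$. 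Next I would set up the Duhamel/perturbation expansion of $u$ around $u_0$: writing $e^{t(\Delta - \frac{\omega^2}{4}x^2 - V)}$ via the Dyson series in $V$, each term is an iterated space-time convolution of Gaussian-type kernels against the symmetric complex measure $\mu_V$. The coefficients $a_r(x,y)$ and remainders $R_r$ in (\ref{IS}) arise by collecting powers of $t$ from this series, exactly as in the $\omega = 0$ case treated in Theorem \ref{PA}.

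The heart of the argument is the factorial-type remainder bound (\ref{IS1}) and the membership of the Borel transform of $v$ in $\mathcal{Q}_{\kappa,T}$. I would obtain these by rerunning Harg\'e's estimates with the Mehler kernel replacing the heat kernel: the only quantities that change are the explicit Gaussian covariances, and for $t$ in a bounded sector $D_T^{+}$ the factors $\sinh(\omega t)$, $\coth(\omega t)$ are analytic and bounded away from their singularities, so they contribute only multiplicative constants depending on $\omega$, $T$, $R$ and can be folded into $K$ and $\kappa$. Concretely, the $r$-fold iterated integral producing $R_r$ carries a volume factor $t^r/r!$ from the time simplex, while the assumed Gaussian decay $\int e^{a|\xi|^2}|\mu_V|(d\xi)<\infty$ controls each vertex; summing the resulting geometric-type series yields a bound of the form $K r!/\kappa^r |t|^r$ uniformly for $(x,y)\in\mathbb{C}_R^{2d}$, which is exactly (\ref{IS1}). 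Having established (\ref{IS1}), the statements that $v(\cdot,x,y)\in\mathcal{P}_{\kappa,T}$, that its Borel transform lies in $\mathcal{Q}_{\kappa,T}$, and that the Laplace transform (\ref{NAW}) recovers $v$, all follow directly by invoking Theorem \ref{ABDEL} (the Nevanlinna--Watson theorem), and Borel summability of the small-time series to $v(t,x,y)$ is then immediate.

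The main obstacle I anticipate is controlling the perturbative expansion \emph{uniformly in the complexified spatial variables} $(x,y)\in\mathbb{C}_R^{2d}$ while the Mehler covariance $\coth(\omega t)$ and $1/\sinh(\omega t)$ vary with $t$. Unlike the pure heat kernel, whose Gaussian exponent is a clean $-(x-y)^2/(4t)$, the oscillator exponent mixes $x^2$, $y^2$ and $x\cdot y$ with $t$-dependent, complex coefficients, so one must verify that the Gaussian integrals defining each Dyson term still converge and remain analytic after the shift $\mathrm{Im}\,x,\mathrm{Im}\,y$ up to $R$ and after $\tau = |\tau|e^{i\theta}$ is rotated into the sector $\tilde S_\kappa$ where $B(v)$ is analytically continued. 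Checking that the real part of the total quadratic exponent stays coercive --- so that the Gaussian bounds survive and the constant $C$ (or its analogue here) picks up at most the stated exponential-of-$|\tau|^{1/2}$ growth --- is the technically delicate point, and is precisely where one leans on Harg\'e's \cite{TH1} machinery rather than reproving it from scratch.
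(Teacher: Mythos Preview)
The paper does not actually prove this theorem: its entire proof reads ``For the proof we refer to \cite{TH}, Theor.\ 3.5.'' Theorem \ref{PAP} is explicitly attributed to Harg\'e and is imported as a black box, so your proposal goes well beyond what the paper itself supplies.

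That said, your sketch is a reasonable reconstruction of the kind of argument one expects in \cite{TH}: taking the Mehler kernel as the unperturbed propagator, expanding in $V$ via a Dyson series, and then tracking the factorial remainder bounds through the iterated Gaussian integrals is indeed the natural route, and your identification of the delicate point---maintaining coercivity of the real part of the quadratic exponent once $(x,y)$ are complexified and $t$ sits in a sector---is accurate. One caution: the estimate (\ref{IS1}) requires a bound that is \emph{uniform in $r$} with a single pair $(K,\kappa)$, whereas the naive Dyson argument you describe produces a $t^r/r!$ from the simplex against an $r$-fold product of vertex factors, and extracting a clean $r!/\kappa^r$ from that combination (rather than, say, something subfactorial or with $r$-dependent constants) is exactly where Harg\'e's specific deformation/contour techniques in \cite{TH,TH1} enter; your sketch waves at this but does not supply the mechanism. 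For the purposes of this paper, however, none of that is needed---citing \cite{TH} is the intended and sufficient ``proof.''
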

\begin{proof}
 For the proof we refer to  \cite{TH}, Theor. 3.5.
 \end{proof}
\begin{rem}
\begin{enumerate}
\item If $\omega\,\in\,\R, u(t,x,y)$ is the kernel of an analytic semigroup, the one given by the density of the kernel of $e^{t(\Delta-W)}$ in $L^2(\R^d)$.
\item For $\omega \in i\R,$ \cite{TH} has also provided a uniqueness result for analytic semigroups related to $u$.
\end{enumerate}
\end{rem}
 \section{Borel summation of the transition kernel for the SDE with additive noise}
% {\bf Sergio} please add here...
 Following Theorems \ref{PA} and \ref{PAP} and the assumptions on $V$ given in the previous section we have the following small time expansion of (a quantity simply related to) the kernels giving the transition semigroup for the solution process of the SDE (1) with $\sigma \equiv {\bf 1}$.
 
 \begin{theo} \label{PAAP}
 Consider the stochastic differential equation
 \[\displaystyle  \left\{\begin{array}{lll} dX_{t}=\beta\, (X_t)\,dt+dW_{t}\\ \displaystyle x_0=x^0,\,\,x^0 \in \mathbb{R}^d\\ \end{array}\right.\]
 
 where the drift $\beta$ is a gradient field such that
 $\beta(x)=2\psi^{-1}\,(x)\nabla \psi\,(x),$ for a primitive smooth
 function $\psi$ of the form \begin{equation} \label{TUN}
 \displaystyle \psi{(x)}=c_{\varphi}\,.\,\varphi(x)\,
 \frac{e^{-\omega\frac{| x |^2}{4}}}{(2\pi\,{2 \over {\omega}})^{d/2}},
 \end{equation}
 
 where $\varphi > 0$ is a smooth function for $\mathbb{R}^d$ to
 $\mathbb{R}_{+}, \,\,\omega^2>0,$ such that $\displaystyle
 {\mid\varphi\mid^2}\,e^{-{\omega\over 2}\mid\cdot\mid^2)}\in L^1\,(\mathbb{R}^d)$.\\ $c_{\varphi}$ is the normalization constant given by
 \begin{equation} 
 \displaystyle c_{\varphi}= \left(\int_{\mathbb{R}^d} \varphi^2(x) \frac{e^{-{\omega\over 2}|x|^2}}{\left(2\pi\frac{2}{\omega}\right)^{d}}\,\,dx\right)^{-1},
 \end{equation}
 so that $\displaystyle \int_{\mathbb{R}^d}|\psi^2|(x)\,dx=1.$\\ 
   Set $\beta_{\varphi}(x)=2\,\varphi^{-1}(x)\nabla \varphi (x)$ and
 assume $V_{\varphi}(x):={1 \over 4}\mid\beta_{\varphi}(x)\mid^2+ {1 \over 2}\, div\,\beta_{\varphi}(x)$
 satisfies the assumption $2.1$, and
 \begin{equation} 
 V_{\varphi}(x)=-\int_{\mathbb{R}^d}\,e^{i xy}\, \, \, \mu_{V}\, \, (dy), \end{equation}
 for some bounded symmetric complex measures $\mu_{V}$ on
 $\mathbb{R}^d,$ such that (as theorem \ref{PA}, for $V=V_{\varphi})$,\,\,\,
 $\displaystyle \int_{\mathbb{R}^d}e^{a|y|^2}\,
 |\mu_{V}|\,(dy)<\infty,$ for some $a>0$.\\
  Then  the following properties hold:\\
 
 (i) The unique solution process $X_t$ of (1) is a diffusion process
 with drift $\beta(x)=\beta_{\varphi}(x)-\omega\,x,$ and invariant
 symmetrizing probability measure $\nu(dx)=\psi^2\,(x)\,dx.$\\
  The corresponding Markov transition semigroup $p_t, t>0$ has a kernel with
 density $k(t,x,y),\,t>0,\, x,y\in\mathbb{R}^d$, such that
 $(p_t\,f)\,(x)=\int_{\mathbb{R}^d}\, k(t,x,y)\, f(y)\,dy,$ for any
 bounded $C_0\,(\mathbb{R}^d)$- function $f$. $p_t$ is the symmetric
 Markov $C_0$ semi-group in $L^2(\mathbb{R}^d,\, \nu)$ associated
 with the classical Dirichlet form $\varepsilon_{\nu}$ given by $\nu$,
 i.e.
 \begin{equation}
 \varepsilon_{\nu}\,(f,g)=\int_{\mathbb{R}^d}\, \nabla f
 \cdot \nabla g\, d\nu.
 \end{equation} 
 The generator $L_{\nu}$ of $p_t$ is the self-adjoint operator in
 $L^2(\mathbb{R}^d, \nu)$ given on smooth function $f$ of compact
 support by $L_{\nu}\,f(x)=(\triangle+\beta_{\varphi}(x)\cdot \, \nabla\,-
 \omega\, x \nabla)\,f(x).$\\ The operator $-L_{\nu}$ is unitary equivalent with the
 self-adjoint operator
 $H=-\triangle+V_{\varphi}(x)-{\omega \over 2}\,x\,\beta_{\varphi}(x)+\frac{\omega^2}{4}|x|^2-{d \over 2}\,\omega$ acting in $L^2\, (\mathbb{R}^d).$ %where $E^0_{\psi}$ is such that $H\psi=0.$
 
 (ii) One has $k(t,x,y)=\psi^{-1}(x) \, u (t,x,y)\, \psi(y),$ with
 $u(t,x,y)$ as in Theorem \ref{PAP} (with $\omega >0$). $u(t, x, y)$ satisfies the
 asymptotic expansion given by (\ref{IS}) and (\ref{IS1}), and consequently $k(t, x, y)$ has a corresponding asymptotic expansion, given by (\ref{IS}), (\ref{IS1}) multiplied by $\psi^{-1}(x) \,\psi(y) $.\\
  The quantity
 \begin{equation}\label{HI}
  \tilde{k}(t,x,y):=(4\,\pi\,t)^{d/2}\, e^{{(x-y)^2} \over {4\,t}}\, k(t,x,y), 
 \end{equation}
 
 which we call modified kernel function for the solution process
 $X_t$ to (1), has, as a function of $t$, an asymptotic expansion in
 powers of t, its Borel transform belonging to $Q_{\kappa,T},$ for any $x,y
 \in\mathbb{C}^{2d}_{R}, \, R>0$( with $Q_{\kappa,T}$ as in point 3 of Hypothesis 2.1 ). The asymptotic expansion for $\tilde{k}(t,x,y)$ is given by\,\,\, $\psi(x)\Big[ 1+ \ds_{j=1}^{r-1} a_j\,\psi(x)(1+ \ds_{j=1}^{r-1} a_j(x ,y)\,t^j+\,R_r(t,x,y))\Big]\,\psi^{-1}(y),$ with the $a_j,\,j=1,...,r-1\,$  as in (\ref{IS}).\\
 The  Laplace transform of this Borel transform (i.e. of $k(t, x, y)$) coincides with the
 function given in (\ref{HI}), which constitutes then the Borel sum of the asymptotic
 expansion in powers of $t$ for the modified transition kernel $\tilde{k}(t,x,y)$ for
 the solution of the SDE.
  \end{theo}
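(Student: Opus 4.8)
The plan is to reduce the entire statement to Theorem~\ref{PAP}, which already does all the analytic heavy lifting for the function $v(t,x,y)=u(t,x,y)\,(4\pi t)^{d/2}e^{(x-y)^2/(4t)}$. The key observation is that the relationship $k(t,x,y)=\psi^{-1}(x)\,u(t,x,y)\,\psi(y)$ asserted in part (ii) turns $\tilde k$ into a pointwise-in-$(x,y)$ rescaling of $v$: directly from the definition (\ref{HI}) we obtain
\begin{equation}
\tilde k(t,x,y)=(4\pi t)^{d/2}e^{(x-y)^2/(4t)}\,\psi^{-1}(x)\,u(t,x,y)\,\psi(y)=\psi^{-1}(x)\,v(t,x,y)\,\psi(y).
\end{equation}
Since the factor $\psi^{-1}(x)\,\psi(y)$ is independent of $t$, every statement about the $t$-dependence of $v$ transfers verbatim to $\tilde k$, multiplied by this constant. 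This is the conceptual core and it is essentially a bookkeeping step once part (ii) is in place.

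First I would establish part (i): that the unique solution of the SDE with $\sigma\equiv\mathbf 1$ and drift $\beta=\beta_\varphi-\omega x$ is a symmetric diffusion with invariant measure $\nu(dx)=\psi^2(x)\,dx$, and that its generator $L_\nu$ is unitarily equivalent (via the ground state transformation $U_\psi$, multiplication by $\psi^{-1}$, exactly as in Hypothesis~\ref{hp:A+F}) to $H=-\Delta+W-\tfrac{d}{2}\omega$ with $W(x)=V_\varphi(x)-\tfrac{\omega}{2}x\cdot\beta_\varphi(x)+\tfrac{\omega^2}{4}|x|^2$. This is a computation: one writes out $\beta=2\psi^{-1}\nabla\psi$ using the explicit form (\ref{TUN}) of $\psi$ as $c_\varphi\varphi\,e^{-\omega|x|^2/4}/(2\pi\cdot 2/\omega)^{d/2}$, so that $\nabla\log\psi=\nabla\log\varphi-\tfrac{\omega}{2}x$ and hence $\beta=\beta_\varphi-\omega x$; the formula $V=\tfrac14|\beta|^2+\tfrac12\operatorname{div}\beta$ then produces the claimed $W$ after collecting the cross terms and the constant $-\tfrac d2\omega$ coming from $\operatorname{div}(-\omega x)=-d\omega$. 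The self-adjointness and the Dirichlet-form identification follow from the standard ground-state-transformation theory cited in the excerpt.

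Next I would verify part (ii)'s hypotheses so that Theorem~\ref{PAP} applies. The point is that $W$ is precisely a potential of the form $V+\tfrac{\omega^2}{4}|x|^2$ appearing in Theorem~\ref{PAP}, with the analytic-potential $V=V_\varphi-\tfrac{\omega}{2}x\cdot\beta_\varphi$ satisfying Hypothesis~\ref{hp:A+F} and the Gaussian-moment bound $\int e^{a|y|^2}|\mu_V|(dy)<\infty$ by assumption. Thus $u(t,x,y)$ here is exactly the $u$ of Theorem~\ref{PAP} with $\omega>0$, and by that theorem $v(t,x,y)$ admits the small-time expansion $\sum_{j=0}^{r-1}a_j(x,y)t^j+R_r(t,x,y)$ with $a_0\equiv 1$, has a Borel transform in ${\cal Q}_{\kappa,T}$ for every $(x,y)\in\mathbb C_R^{2d}$, and is the Borel sum (equivalently, the Laplace transform (\ref{NAW}) of that Borel transform) of its own expansion. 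Multiplying by the $t$-independent $\psi^{-1}(x)\psi(y)$, the coefficients of $\tilde k$ become $\psi^{-1}(x)\,a_j(x,y)\,\psi(y)$ and the remainder becomes $\psi^{-1}(x)\,R_r(t,x,y)\,\psi(y)$; linearity of the Borel transform in $t$ shows $B(\tilde k)=\psi^{-1}(x)\,B(v)\,\psi(y)$, which again lies in ${\cal Q}_{\kappa,T}$, and linearity of the Laplace transform (\ref{NAW}) gives that its Laplace transform is $\psi^{-1}(x)\,v\,\psi(y)=\tilde k$. Hence $\tilde k$ is Borel summable with Borel sum equal to itself, as claimed.

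The main obstacle I anticipate is the algebraic verification in part (i) that the ground-state transformation of $L_\nu$ lands exactly on $H$ with the stated potential $W$ and constant $-\tfrac d2\omega$ --- in particular tracking the cross term $-\tfrac{\omega}{2}x\cdot\beta_\varphi$ and confirming that $W$ indeed has the representation $V_\varphi-\tfrac{\omega}{2}x\cdot\beta_\varphi+\tfrac{\omega^2}{4}|x|^2$ with an \emph{analytic} Fourier-measure part fulfilling the Gaussian moment condition required by Theorem~\ref{PAP}. The harmonic term $\tfrac{\omega^2}{4}|x|^2$ is precisely the one Theorem~\ref{PAP} is designed to absorb, so the remaining task is to show that $V_\varphi-\tfrac{\omega}{2}x\cdot\beta_\varphi$ (rather than $V_\varphi$ alone) still satisfies Hypothesis~\ref{hp:A+F}; the factor $x\cdot\beta_\varphi$ may require an additional regularity or growth check on $\varphi$, and I would isolate this as the one genuinely non-routine point. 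Everything downstream of part (ii) is then purely the linearity/bookkeeping argument above.
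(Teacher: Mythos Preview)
Your approach coincides with the paper's: part~(i) is handled by the Dirichlet-form integration by parts and the ground-state transformation computation you outline, arriving at $\tilde V = V_\varphi + \tfrac{\omega^2}{4}|x|^2 - \tfrac{\omega}{2}x\cdot\beta_\varphi - \tfrac{d\omega}{2}$, and part~(ii) is deduced from Theorem~\ref{PAP} by exactly the $t$-independent multiplicative bookkeeping you describe (the paper states this step in a single sentence). The concern you isolate---that it is $V_\varphi - \tfrac{\omega}{2}\,x\cdot\beta_\varphi$ rather than $V_\varphi$ alone that must satisfy the Fourier-measure and Gaussian-moment hypotheses feeding into Theorem~\ref{PAP}---is legitimate and is not addressed in the paper's proof either; the paper simply computes $\tilde V$ and then invokes Theorem~\ref{PAP} without further comment, so your proposal is at least as complete as the original on this point.
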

 \begin{proof}
 i) The proof uses essentially the theory of Dirichlet forms and the
 regularity assumption on $\psi$, see, e.g. \cite{Al}, \cite{AHKS}, \cite{FOT}.\\
  Let us look at the Dirichlet form $\varepsilon_{\nu}$ defined on
 $C^{\infty}_{0}\, (\mathbb{R}^d)$. By integration by parts we have
 for $f,g \in C^{\infty}_{0}\, (\mathbb{R}^d):$
 \begin{equation}
 \varepsilon_{\nu}\, (f,g)=\int \nabla f \cdot \nabla g\, d\nu=-\int f
 \triangle \,g \, d\nu-\int f \, \beta_{\psi}\nabla g \, d\nu
 ,\end{equation}
  where \begin{eqnarray}\label{ZIT} \beta_{\psi}(x)&=& 2 \, \nabla \ln\, \psi(x)\nonumber\\&=& 2\nabla \ln\,
 \varphi(x)+2\nabla \ln c_{\varphi} \frac{e^{-\omega{|x|^2 \over 4}}}{(2\pi
 \frac{2}{\omega})^{d\over 2}}\nonumber\\&=& \beta_{\varphi}(x)-\omega\,x,
 \end{eqnarray}
 where we used the definition of $\psi, \beta_{\psi}$ and $\beta_{\varphi}$. Hence we found the given expression for $L_{\nu}.$ The function identically equal to ${\bf 1}$ is the domain of the positive operator $L_{\nu}$ and we have $L_{\nu} {\bf 1}(x)=0$ for all $x \in \R^d.$\\
  The unitary equivalence comes from the ground state transformation
 $L^2(\mathbb{R}^d;\nu)\rightarrow L^2(\mathbb{R}^d)$ given by $f \in
 L^2 (\mathbb{R}^d;\nu)\rightarrow f\psi \in L^2 (\mathbb{R}^d),$
 which on dense domain maps $-L_{\nu}$ into $H=\psi{(-L_{\nu})} \,\psi^{-1}$ and yields, 
 by computation for smooth $f$:
 \begin{equation}\label{HAD}
-\psi^{-1}H\,\psi \,f=L_{\nu}\,f=(\triangle +\beta_{\psi}\cdot
 \nabla)f\,,
 \end{equation}
 where we used the expression for $L_{\nu}$ on smooth functions.\\
From (\ref{HAD}) we deduce, multiplying both sides by the smooth function $\psi$, for $f$ smooth:
\begin{equation}\label{HID}
-H\,(\psi \,f)=\psi(\triangle +\beta_{\psi}\cdot
 \nabla)f\,.
 \end{equation}
We claim that, on smooth functions,\begin{equation} H=-\triangle+\tilde{V},\end{equation} with 
\begin{equation}\label{ZI}
\tilde{V}(x):= {1 \over 4}\mid\beta\mid^2_{\psi}(x)+{1 \over 2}\,\mbox{div}\,\beta_{\psi}(x),
 \end{equation}
  In fact then using Leibniz formula and the definition of $ \beta_{\psi}$:
 \begin{eqnarray} (-\triangle+\tilde{V} )\,(\psi\,f)&=&-\triangle
 \psi\cdot f-2 {\nabla
 \psi} \nabla f-\psi\,\triangle f +\frac{\mid\nabla \psi\mid^2}{\psi^2}\,\psi f+\,\mbox{div}\,\frac{\nabla \psi}{\psi}\,\psi f
 \nonumber\\&=& -\triangle \psi\,f-2\, {\nabla \psi}\,\nabla f-\psi\triangle f+  \frac{\mid\nabla
  \psi\mid^2}{\psi}\,f+ \frac{\psi\triangle f-\mid\nabla
    \psi\mid^2}{\psi^2}\,\psi f,\,\,\,\,\,\,\,\,\,\,\,\,\,\,\,\,\,\,\,\,   
 \end{eqnarray}
 which is seen to be equal to the right hand side of (\ref{HID}).  This shows that (\ref{HAD}) holds.\\
 From (\ref{HID}) we see, taking $f= {\bf 1},$ that $ H\psi=0,$ or using  $H=-\triangle +\tilde{V}$, that $\tilde{V} $ is also expressed by $ \tilde{V}= \frac{\triangle \psi}{\psi}.$\\
Computing in our special case of $\psi$ of the form (\ref{TUN}) and using (\ref{ZIT}) and (\ref{ZI})
 we get 
 \begin{equation}
  \tilde{V}(x)={1 \over 4} \mid\beta_{\varphi}(x)- \omega x \mid^2+ {1 \over 2}\,\mbox{div}\,\beta_{\varphi}(x)-\frac{\omega\,d}{2}\,,\,\,(since\,\,\, \mbox{div}\, x =d).
 \end{equation}
 Hence 
 \begin{equation}
   \tilde{V}(x)=V_{\varphi}(x) + {1 \over 4}\,\omega^2 \mid x\mid^2-{\omega \over 2}\beta_{\varphi}(x)\,x -{\omega \over 2}\,d,
  \end{equation}
 where 
  \begin{equation}
  {V}_{\varphi}(x):={1 \over 4} \mid\beta_{\varphi}(x)\mid^2+ {1 \over 2}\,\mbox{div}\,\beta_{\varphi}(x)
  \end{equation}
 is as stated in the theorem.\\
  Then this proves i), except for the statements on the smoothness of
 the density of the kernel of $p_t,$ for $t >0,$ that follow from the
 general theory of strictly elliptic generators of diffusions, with smooth
 coefficients, and the theory of Dirichlet forms.\\
 ii) The results on the asymptotic expansion for $k$ follow from the one on $v$ as given in theorem \ref{PAP}.
 \end{proof}
 \begin{rem}
i) From the proof it follows that we have also $\tilde{V}=\frac{\triangle \psi}{\psi} $ and $ {V}_{\varphi}=\frac{\triangle \varphi}{\varphi}.  $\\
ii) The unitary transformation $L^2(\mathbb{R}^d)\rightarrow
L^2(\mathbb{R}^d, |\psi|^2\,dx)$ given in the proof of Theorem 3.1,
defined by
\begin{equation} 
f\in L^2 (\mathbb{R}^d)\rightarrow \frac{f}{\psi} \in L^2 \, (\mathbb{R}^d,
|\psi|^2\,dx),
\end{equation} is called ground state transformation, see, e.g,
\cite{Al, AHKS, AlMa, RS}. It permits to transform the operator\,
$-(\triangle + \beta \cdot \nabla)$ with $\beta$ a gradient field( as
coefficient of the first order differential term) into the operator  $-\triangle
+\tilde{V},$ (with a potential term $\tilde{V},$ a multiplication operator, i.e. a zero order differential operator). In probability
theory it is often also called Doob's $h$-transform.
\end{rem}
\section{Some remarks on applications}
We can apply the above theorem \ref{PAAP} to stochastic differential equations descriving  a drift term perturbations of  Brownian motion where the drift is a gradient
field of the form\, $\beta(x)=\beta_{(V)}(x)-\omega\,x,\, \omega >0.$ $\beta_{(V)} $ satisfies the smoothness assumption for $V= ({1 \over 4}\mid\beta\mid^{2}+ {1 \over 2}\mbox{div}\, \beta)$
given in hypothesis 2.1 in terms of the measure $\mu_{V}$ of which $V$ is the
Fourier transform. Such models can be looked upon as perturbations of the
Ornstein-Uhlenbeck (velocity) process by a drift term given by $\beta_{(V)}$.
They have numerous applications e.g. in quantum mechanics and
(quantum) statistical mechanics, see, e.g \cite{AHKS}.\\
 For $d=1$ there are applications in mathematical finance(
smooth perturbations by a drift term of the Vasicek model, for the latter model, see,
e.g. \cite{Vas}).\\
Our results gives essentially small time Borel summable expansions in power series for the
transition probabilities in such models, permitting to compute all
terms, with control on remainders.\\
In areas like mathematical finance, (see, e.g. \cite{FoS, PJA, GHLOW, SA, Wafa}),  models  with multiplicative noise also play an extensive role. They are usually stated for real valued processes satisfying SDEs of the form (1) with $d=1$ but with $\sigma$ not constant  (e.g. models of the type of
Black-Scholes, Pearson's or CIR models).\\
Some of these models can also be covered, by an adaptation of methods in this paper, in the sense of getting for them Borel summable asymptotic expansions for their solutions processes. Let us describe the main
idea, leaving for future publications more detailed applications. The
idea consists in a use of time change in the form of a Lamperti
transformation from an equation of the form (1) for $ X_t$, with $\sigma$
constant positive, to an equation for a transformed process $\tilde{X}_t$
satisfying an equation of the form
$d\tilde{X}_t=\tilde{\beta}_{\sigma}\, (\tilde{X}_{t})\,dt+
dB_{t}$ where $B_t$ is a standard Brownian motion and
$\tilde{\beta}_{\sigma}$ is a new drift obtained by using a Lamperti
transforms $\gamma$ defined by $\displaystyle \gamma_{s_0}(s):=
\int^s_{s_0}\, \frac{dy}{\sigma (y)}, \, {s, s_0}\, \in \mathbb{R},\, s_0< s$ (we assumed $\sigma$ to be smooth strictly positive and such that
$\displaystyle \frac{1}{\sigma} \in L^1_{loc}\, (\mathbb{R}), \,
\frac{1}{\sigma} \notin L^1 (\pm\infty),\, \beta'-\beta
\frac{\sigma'}{\sigma}-\frac{1}{2}\,\sigma\sigma''$ bounded,
$|\beta|,\sigma$ linearly bounded.\\
 By results in Lamperti \cite{La}(see also, e.g.,[\cite{KaShre},Ex. 2.20], \cite{MMH}), we then have
$\tilde{\beta}_{\sigma}(x):=\displaystyle
\frac{\beta(s)}{\sigma(s)}-\frac{\sigma'(s)}{2},$ with $s$ and $x$
related by $\gamma_{s_0}(s)=x$ and the transformation $ x \rightarrow  s$ being invertible, under the stated
assumptions.) The transition function $p^X$ for the process $X$ is
then related to the one for $p^{\tilde{X}}$ by $p^X\,
(t,x,\tilde{x})=\sigma(\tilde{s})^{-1}\, p^{\tilde{X}}\,
(t,s,\tilde{s}),$ with
$\tilde{\gamma}_{s_0}(\tilde{s})=\tilde{x},\,\,s,\tilde{s}, x , \tilde{x}$ being
real variables. Provided $\tilde{\beta}_{\sigma}$ satisfies the
assumptions stated for $\beta$ in theorem \ref{PAAP}, we can apply this theorem to $\tilde{X}$ and then obtain a Borel summable small time asymptotic expansion for $p^{\tilde{X}}$,
thus also for the transition probability kernel for the process
$X$. In this way we obtain the following:
\begin{theo} \label{TOUN} Suppose $\sigma$ satisfies the assumption $\sigma>0,\,\sigma$
smooth, ${1 \over \sigma}\in L^{1}_{loc}(\mathbb{R}), \, {1 \over \sigma}\not\in
L^1(\pm \infty),$ moreover $\beta'-\beta
\frac{\sigma'}{\sigma}-\frac{1}{2}\sigma\sigma''$ bounded, and
$|\beta|,\, \left|\frac{\beta}{\sigma}-\frac{\sigma'}{2}\right|$ and $
\sigma$ linearly bounded at infinity, and finally ${1 \over 4}\mid\tilde{\beta}_{\sigma}(x)\mid^2+ {1 \over 2}\,div\, \tilde{\beta}_{\sigma}(x)$ required to be of the form $\displaystyle
V_{\varphi}(x)+\frac{1}{4}\,{\omega}\,x^2-{\omega \over 2}\,{\beta}_{\varphi}(x)x-{d \over 2}\omega  $ for some ${\omega}>0,\, \varphi>0,$ where $\varphi, V_{\varphi},  {\beta}_{\varphi}$ are as in Theorem \ref{PAAP}. Then $p^X$ has a Borel summable small time asymptotic expansion in powers of $t$.
\end{theo}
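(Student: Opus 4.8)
The plan is to reduce the multiplicative-noise equation to the additive-noise setting of Theorem \ref{PAAP} by means of the Lamperti change of variables $\gamma_{s_0}(s)=\int_{s_0}^s \frac{dy}{\sigma(y)}$, and then to transport the Borel summable expansion back to $p^X$ through the (time-independent) Jacobian of this change of variables. First I would apply It\^{o}'s formula to $\tilde{X}_t:=\gamma_{s_0}(X_t)$. Since $\gamma_{s_0}'(s)=\sigma(s)^{-1}$ and $\gamma_{s_0}''(s)=-\sigma'(s)\sigma(s)^{-2}$, and $d\langle X\rangle_t=\sigma(X_t)^2\,dt$, one gets
\[
d\tilde{X}_t=\Big(\frac{\beta(X_t)}{\sigma(X_t)}-\frac{\sigma'(X_t)}{2}\Big)\,dt+dB_t ,
\]
with $B_t$ a standard Brownian motion; rewriting the drift as a function of $\tilde{X}_t=\gamma_{s_0}(X_t)$ yields exactly $d\tilde{X}_t=\tilde{\beta}_\sigma(\tilde{X}_t)\,dt+dB_t$ with $\tilde{\beta}_\sigma$ as in the statement. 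For this drift to be globally well defined I must check that $\gamma_{s_0}$ is a smooth bijection of $\R$ onto $\R$: smoothness and strict monotonicity follow from $\sigma>0$ smooth and $\frac{1}{\sigma}\in L^1_{loc}(\R)$, while surjectivity onto all of $\R$ (hence global invertibility of the map $s\mapsto x$ and conservativeness of $\tilde{X}$) follows from $\frac{1}{\sigma}\notin L^1(\pm\infty)$. The boundedness of $\beta'-\beta\frac{\sigma'}{\sigma}-\frac{1}{2}\sigma\sigma''$ together with the stated linear growth bounds on $|\beta|$, $\big|\frac{\beta}{\sigma}-\frac{\sigma'}{2}\big|$ and $\sigma$ are precisely what the results of Lamperti \cite{La} require for existence, uniqueness and non-explosion of both processes.

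Next I would verify that $\tilde{\beta}_\sigma$ meets the hypotheses of Theorem \ref{PAAP}. In dimension $d=1$ the gradient-field requirement is automatic, so the substantive point is the structure of the associated potential. By the last assumption of the statement the quantity $\frac{1}{4}|\tilde{\beta}_\sigma|^2+\frac{1}{2}\,\mbox{div}\,\tilde{\beta}_\sigma$ has exactly the form of the potential $\tilde{V}$ computed in the proof of Theorem \ref{PAAP} for a drift of type $\beta_\varphi-\omega x$; matching the two identifies $\tilde{\beta}_\sigma$ with an admissible drift, with $V_\varphi$ the Fourier transform of a symmetric measure $\mu_V$ obeying the Gaussian integrability of Hypothesis \ref{hp:A+F}. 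Theorem \ref{PAAP} then applies to $\tilde{X}$: the modified kernel $(4\pi t)^{1/2}e^{(s-\tilde{s})^2/(4t)}p^{\tilde{X}}(t,s,\tilde{s})$ has a small time asymptotic expansion in powers of $t$ whose Borel transform lies in ${\cal Q}_{\kappa,T}$ for every $s,\tilde{s}\in\C_R$, and whose Borel sum recovers it.

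Finally I would transport this back to $p^X$. Because $\tilde{X}=\gamma_{s_0}(X)$ is a diffeomorphic change of the spatial variable only, the transition densities are related by the time-independent Jacobian factor $p^X(t,x,\tilde{x})=\sigma(\tilde{s})^{-1}\,p^{\tilde{X}}(t,s,\tilde{s})$, with $\gamma_{s_0}(s)=x$ and $\gamma_{s_0}(\tilde{s})=\tilde{x}$. Since $\sigma(\tilde{s})^{-1}$ does not depend on $t$, the entire $t$-dependence — the coefficients $a_j$, the remainders $R_r$ and the bounds (\ref{IS1}) — is inherited by $p^X$ up to this fixed prefactor; the Gaussian leading factor is likewise carried over, now with the intrinsic (Lamperti) distance $|s-\tilde{s}|=|\gamma_{s_0}(x)-\gamma_{s_0}(\tilde{x})|$ in place of $|x-\tilde{x}|$. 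Hence the correspondingly modified kernel for $X$ is Borel summable in $t$ with sum $p^X$, which is the assertion.

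The hard part will be the global analytic bookkeeping of the change of variables rather than any single estimate: one must confirm that $\gamma_{s_0}$ is a genuine bijection of $\R$ (so that $\tilde{\beta}_\sigma$ is defined on all of $\R$ and neither diffusion explodes), and that the regularity and growth hypotheses on $(\beta,\sigma)$ translate into the smoothness and admissibility of $\tilde{\beta}_\sigma$ demanded by Theorem \ref{PAAP}. The decisive structural condition — that the transformed potential be the Fourier transform of a measure with Gaussian-exponential tails — is imposed as a hypothesis, and it is exactly the point at which genuine restrictions on the pair $(\beta,\sigma)$ enter.
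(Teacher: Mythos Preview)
Your proposal is correct and follows essentially the same route as the paper: reduce to the additive-noise case via the Lamperti transform $\gamma_{s_0}(s)=\int_{s_0}^{s}\sigma(y)^{-1}\,dy$, identify the transformed drift $\tilde{\beta}_\sigma$, invoke Theorem~\ref{PAAP} for $\tilde{X}$, and pull the expansion back through the time-independent Jacobian relation $p^X(t,x,\tilde{x})=\sigma(\tilde{s})^{-1}p^{\tilde{X}}(t,s,\tilde{s})$. In fact the paper gives only a sketch in the discussion preceding the theorem, whereas you spell out the It\^o computation, the bijectivity of $\gamma_{s_0}$ from $1/\sigma\notin L^1(\pm\infty)$, and the role of each growth hypothesis, so your write-up is, if anything, more complete.
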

\begin{rem}
All assumptions in Theorem \ref{TOUN} are e.g if the following conditions are satisfied:\\
i) for $\sigma>0 $ smooth of the form
$\sigma(x)=\sigma_0\, +c_1 x+ o(x)$ for some $\sigma_0 >0$ sufficiently
small and $c_1>0$ as $x \rightarrow 0$,  and $\sigma(x)= c_2\,x^{\alpha}+o(x^{\alpha} ),$ for some $ \alpha <1, c_2 > 0$ as $\mid x\mid
\rightarrow \infty$.\\
ii) In addition to $\beta$ being $\,C^1$ and $ \mid \beta\mid$ linearly bounded at infinity, one requires that  $\beta(x) \sim x^{\beta},\, \beta >1$ as $\mid x\mid \rightarrow 0 $.\\
We postpone further discussions of such conditions and applications to forthcoming work, relating also to previous work by \cite{Lu} on Borel summability in the
framework of Black-Scholes type models.
\end{rem}
\newpage
\noindent{\bf Acknowledgements:} This work was supported by King
Fahd University of Petroleum and Minerals under the project SF181-MATH-476. The authors gratefully acknowledge this support.

\medskip
\vskip8cm
\begin{flushleft}
\footnotesize
{\it Boubaker Smii} \\
Dept. Mathematics, King Fahd University of Petroleum and Minerals, \\
Dhahran 31261, Saudi Arabia\\
\medskip
{\it Sergio Albeverio}  \\
 Dept. Appl. Mathematics, University of Bonn,\\
HCM;  BiBoS, IZKS, KFUPM(Dhahran); CERFIM (Locarno)\\
\medskip

\medskip
 E-mail: boubaker@kfupm.edu.sa  \\
\hspace{1,1cm}albeverio@uni-bonn.de \\

 \end{flushleft}
\end{document}